\def\L{\mathcal{L}}
\def\A{\mathbb{A}}
\def\G{\mathbb{G}}
\def\P{\mathbb{P}}
\def\onto{\twoheadrightarrow}
\def\into{\hookrightarrow}
\def\F{\mathbb{F}}
\theoremstyle{plain}
\newtheorem{thm}{Theorem}
\newtheorem{Lem}[thm]{Lemma}
\newtheorem{Prop}[thm]{Proposition}
\newtheorem{Def}[thm]{Definition}
\theoremstyle{remark}
\newtheorem{Rem}[thm]{Remark}
\begin{document}

\title{Variants of the Kakeya problem over an algebraically closed field\footnote{
The final publication in Archiv der Mathematik is available at Springer via 
http://dx.doi.org/[10.1007/s00013-014-0685-6]
}}
\author{Kaloyan Slavov}
\maketitle
\begin{abstract}
First, we study constructible subsets of $\A^n_k$ which contain a line in any direction. We classify the smallest such subsets in $\A^3$ of the type $R\cup\{g\neq 0\},$ where $g\in k[x_1,...,x_n]$ is irreducible of degree $d$, and $R\subset V(g)$ is closed. Next, we
study subvarieties $X\subset\A^N$ for which the set of directions of lines contined in $X$ has the maximal possible dimension. 
These are variants of the Kakeya problem in an algebraic geometry context.  
\end{abstract}

\section{Introduction}

In \cite{W}, T. Wolff proposed a finite field model for the classical harmonic analysis Kakeya problem. 
Namely, he defines a Kakeya subset $E$ of $\F_q^n$ to be a subset which contains a line in any direction, in analogy with the notion of a Kakeya subset of 
$\mathbb{R}^n$, which is a compact subset containing a unit line segment in any direction.
The finite field Kakeya problem was solved by Z. Dvir in \cite{Dvir} and has proved to be a useful model for the hard classical Euclidean problem.  Recently in \cite{DH}, Dummit and Hablicsek answered a question of Ellenberg, Oberlin, and Tao about Kakeya subsets of
$\F_q[[t]]^n$; this is a version of the Kakeya problem over non--archimedean local rings. 

We give two different {\it algebraic--geometry} versions of the Kakeya problem that are interesting over an algebraically closed field $k$, of any characteristic. Our main motivation is that the smallest known example of a Kakeya subset of $\F_q^n$ arises from a Kakeya variety as in Definition \ref{def_linear_Kakeya_cover}. Thus, this extra structure of an algebraic variety coming with the smallest known example of a Kakeya subset of $\F_q^n$ should not be neglected, and studying it presents sufficient motivation and independent interest. At the very least, this leads to interesting algebraic geometry questions and structural results. More importantly, algebraic geometry models tie with the general philosophy and metatheorem that extreme combinatorial configurations posses algebraic structure. The classical harmonic analysis Kakeya problem is notoriously difficult; on the other hand, the algebraic geometry version that we present here is approachable due to the rich structure coming with the hypothesis of constructibility.  

In Section  \ref{weak_Kakeya_section},
we study a constructible subset $E$ of $\A^n$
which contains a line in any direction. In analogy with classical frameworks, we call such an $E$ a Kakeya subset of $\A^n$. 
The starting point of our investigation in Section \ref{geom_Kakey_vars_section} is

\begin{Prop}
{\bf a)}\ If $E\subset\A^n_k$ is a constructible Kakeya subset, then $\dim E=n.$

{\bf b)}\ Let $E\subset\A^n_k$ be an open subset. Then $E$ is a Kakeya subset if and only if $\dim E^c\leq n-2$. 
\label{dim_Kakeya}
\end{Prop}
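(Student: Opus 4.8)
The plan is to pass to projective space and exploit the hyperplane at infinity $H_\infty = \P^n \setminus \A^n \cong \P^{n-1}$, whose points parametrize directions. For part (a), let $Y \subset \P^n$ be the projective closure of $\overline{E}$, so $\dim Y = \dim E$. The key observation is that for each direction $v \in H_\infty$ the Kakeya hypothesis gives a line $\ell_v \subset E$ of direction $v$; its projective closure lies in $Y$ and passes through $v$, so $v \in Y$. As $v$ ranges over $H_\infty$ this forces $H_\infty \subseteq Y$, whence $\dim E \ge n-1$. To upgrade this to $\dim E = n$, I would rule out $\dim E = n-1$: in that case $Y$ has dimension $n-1$ and contains the irreducible $(n-1)$-dimensional variety $H_\infty$, so $H_\infty$ would be an irreducible component of $Y$; but every component of $Y$ is the projective closure of a component of the affine variety $\overline{E}$ and hence meets $\A^n$, so cannot equal $H_\infty$ --- a contradiction.

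For part (b), note first that $E^c$ is closed since $E$ is open. For the implication $\dim E^c \le n-2 \Rightarrow$ Kakeya, I fix a direction $v$ and project along it, $\pi_v : \A^n \to \A^n/\langle v\rangle \cong \A^{n-1}$; the lines of direction $v$ are exactly the fibers of $\pi_v$. Since $\dim E^c \le n-2$, the image $\pi_v(E^c)$ is a constructible set of dimension $\le n-2 < n-1$, so it is not dense; choosing $q$ outside its closure, the fiber $\pi_v^{-1}(q)$ is a line of direction $v$ disjoint from $E^c$, hence contained in $E$. As $v$ was arbitrary, $E$ is Kakeya.

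Conversely, suppose $E$ is Kakeya and, for contradiction, that $\dim E^c = n-1$. Let $Z$ be an $(n-1)$-dimensional component of $E^c$ and $\bar Z \subset \P^n$ its projective closure, an irreducible hypersurface with $\bar Z \not\subset H_\infty$; thus $\bar Z \cap H_\infty$ has dimension $n-2 < \dim H_\infty$, and I may choose a direction $v \in H_\infty \setminus \bar Z$. For any affine line of direction $v$, its projective closure $L$ is a line through $v$ with $L \cap H_\infty = \{v\}$, and $v \notin \bar Z$ forces $L \not\subset \bar Z$; since a line and a hypersurface in $\P^n$ always meet (their dimensions add up to $n$), we have $L \cap \bar Z \neq \emptyset$, and every such intersection point is affine, as it is not $v$. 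Hence every line of direction $v$ meets $Z \subseteq E^c$, so no line of direction $v$ lies in $E$, contradicting the Kakeya property. Therefore $\dim E^c \le n-2$.

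The main obstacle is the converse in part (b): translating the existence of a codimension-one component of $E^c$ into a single direction all of whose lines are blocked. The subtlety, which the projective viewpoint resolves cleanly, is ensuring that the forced intersection of a line with the hypersurface $\bar Z$ occurs in the affine chart rather than escaping to infinity --- this is exactly what the choice $v \notin \bar Z$ guarantees, since a line not lying in $\bar Z$ meets it in finitely many points and the single point at infinity $v$ is excluded.
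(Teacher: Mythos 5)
Your proof is correct and follows essentially the same route as the paper: part (a) is the identical ``the hyperplane at infinity would have to be a component of the projective closure'' argument, and in part (b) your projection $\pi_v$ away from the direction $v$ is just the paper's incidence correspondence $\{(x,l): x\in l\}\subset Z\times\G_v$ in different clothing, while the converse via a point $v\in V(x_0)-\overline{Z}$ and the fact that a line must meet a hypersurface in $\P^n$ is exactly the paper's Bezout argument. No gaps.
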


 Consider a constructible Kakey subset $E\subset\A^n_k$, where $E$ is a finite disjoint union 
$E=\cup T_i$ of locally closed
subsets $T_i$ of $\A^n_k.$ Each $T_i$ is an open subset of a closed subset $\hat{T}_i$ of $\A^n$,
and for some $i$ (uniquely determined), $U:=T_i$ is open
in $\A^n$ by Proposition \ref{dim_Kakeya}a. 
 So, 
$E=R_1\cup\dots\cup R_s\cup U,$
 where each $R_i$ is locally closed in $\A^n$ with $\dim \overline{R_i}\leq n-1$, and $U$ is a nonempty open in $\A^n$. 
 
If $\dim U^c\leq n-2,$ then $U$ itself is a Kakeya set by Proposition \ref{dim_Kakeya}b. 

Consider now the ``small" case when $\dim U^c=n-1.$ 
So, $U^c$ is a union of finitely many irreducible hypersurfaces, together with some lower--dimensional irreducible components. We say that $E$ is of type $t$ if $U^c$ has exactly $t$ irreducible components, all of them of dimension $n-1$. The larger the $t$, the smaller the $U$. We focus on the case $t=1$ and consider constructible Kakeya subsets of $\A^n$ of type $1$; our goal is to describe the smallest such subsets. 

We now study the decomposition
\begin{equation}
E=R_1\cup\dots\cup R_s\cup\{g\neq 0\}
\label{form_Kakeya}
\end{equation}
(where $g\in k[x_1,...,x_n]$ is irreducible) of a Kakeya subset of type $1$, 
in terms of the degree $d$ of $g$. A larger degree $d$ will correspond to a smaller Kakeya set.
Further, for a fixed $d$, a small Kakeya set will have $R_1\cup\dots\cup R_s$ small (the first measure will be its dimension). 

In Section \ref{examples_section}, we give an extreme example of a constructible Kakeya subset of $\A^3$ of type $1$:

\begin{Prop}
For any $d\geq 3$, there exists an irreducible hypersurface $V(g)\subset\A^3_k$ of degree $d$, and $2$ points $R_1,R_2$ on it, such that $\{R_1,R_2\}\cup \{g\neq 0\}$ is a Kakeya subset of $\A^3_k.$
\label{example_cone_3_points}
\end{Prop}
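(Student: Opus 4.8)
The plan is to realize $V(g)$ as a \emph{cone} and to exploit the high multiplicity of two of its points. Concretely, I would take
$g=x_1^{d-1}x_3+x_2^d$, which is homogeneous of degree $d$, so $S:=V(g)\subset\A^3_k$ is a cone with vertex at the origin $R_1=(0,0,0)$. Irreducibility is immediate, since $g$ is of degree $1$ and primitive in $x_3$ over $k[x_1,x_2]$. A direction is a point $[v]\in\P^2$ with $v=(v_1,v_2,v_3)$, and, $S$ being a cone, the directions split into the \emph{good} ones, where $g(v)\neq 0$, and the \emph{bad} ones, forming the plane curve $B=\{g(v)=0\}$. I then set $R_2=(0,0,1)$, which lies on $S$; a short local computation (expanding $g$ at $R_2$, whose lowest-order term is $x_1^{d-1}$) shows that $R_2$ is a point of multiplicity $d-1$ on $S$.

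The good directions are free: for $g(v)\neq 0$ the line $\{tv:t\in k\}$ through the vertex satisfies $g(tv)=t^dg(v)$, which vanishes only at $t=0$, so it meets $S$ only at $R_1$ and hence lies in $\{R_1\}\cup\{g\neq 0\}\subseteq E$. The key idea for the bad directions is to route lines through the multiplicity-$(d-1)$ point $R_2$. Such a line meets $S$ at $R_2$ with multiplicity at least $d-1$, while the condition $g(v)=0$ kills the $t^d$ term and forces the restriction of $g$ to any line in direction $v$ to have degree at most $d-1$ in the parameter. Thus, as long as the line is not contained in $S$, its full intersection with $S$ is concentrated at $R_2$, giving $\ell\cap S=\{R_2\}$ and $\ell\subseteq\{R_2\}\cup\{g\neq 0\}\subseteq E$. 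The explicit parametrization makes this transparent: along $t\mapsto R_2+tv$ one finds $g=v_1^{d-1}t^{d-1}$ after substituting $g(v)=0$, which vanishes only at $t=0$ precisely when $v_1\neq 0$.

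The one genuine obstacle is the single bad direction where this mechanism collapses, namely $p_0=[0:0:1]$ (the $x_3$-direction): here the line through $R_2$ is the ruling $\{x_1=x_2=0\}\subseteq S$, and so is the line through $R_1$, so neither special point helps. I would handle this remaining direction by a line that misses $S$ altogether: along a vertical line $\{(x_1^0,x_2^0,t):t\in k\}$ the polynomial $g$ is affine-linear in $t$ with leading coefficient $(x_1^0)^{d-1}$, so taking $x_1^0=0$ and $x_2^0\neq 0$ gives $g\equiv (x_2^0)^d\neq 0$, a line disjoint from $S$ and thus contained in $\{g\neq 0\}$. Combining the three cases shows $\{R_1,R_2\}\cup\{g\neq 0\}$ is a Kakeya subset. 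I expect the delicate points to be exactly this exceptional direction, together with verifying that every \emph{other} bad $v$ has $v_1\neq 0$ — which here follows because $v_1=0$ and $g(v)=0$ force $v_2=0$, i.e.\ $v=p_0$ — and checking that $R_2$ really has multiplicity $d-1$, rather than more, so that the intersection-multiplicity count is tight.
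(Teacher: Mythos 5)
Your construction is, up to a renaming of coordinates and a sign, exactly the paper's: the cone over the cuspidal curve $y^{d-1}z=x^d$ with $R_1$ the vertex (multiplicity $d$) and $R_2$ the point above the cusp (multiplicity $d-1$), with the same three-case division of directions --- through the vertex when $g(v)\neq 0$, through $R_2$ for the other directions on the curve at infinity, and a line missing the surface entirely for the one cuspidal direction $[0:0:1]$. The argument is correct, and your explicit expansion $g(R_2+tv)=t^{d-1}v_1^{d-1}+t^d g(v)$ just makes the paper's multiplicity count concrete.
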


The main result of Section \ref{uniqueness_cone_section} is that the construction in the proof of Proposition \ref{example_cone_3_points} is essentially the only example of such an extreme small Kakeya subset of $\A^3$ of type $1$; to state it precisely, we introduce some notation.
For a subvariety $X\subset\A^n$, we denote by $\overline{X}$ the
projective closure of $X$ in $\P^n$. The hyperplane at infinity in $\P^n$ is denoted by $V(x_0)$, so $x_1,...,x_n$ will be affine coordinates in $\A^n$, while $[x_0:x_1:...:x_n]$ will be projective coordinates in $\P^n$. For a polynomial $g\in k[x_1,...,x_n]$,
we denote by $G$ its homogenization with respect to $x_0,$ so $\overline{V(g)}=V(G).$

\begin{Prop}
Let $E=\{R_1,\dots, R_s\}\cup\{g\neq 0\}\subset\A^3_k$ be a Kakeya subset, where $g$ is irreducible of degree $d\geq 3,$ and $R_1,...,R_s\in V(g).$ Let $C=V(G)\cap V(x_0)$.
Then $C$ is irreducible, and $V(G)$ is a cone over it. Moreover, if $C$ is non--flexy and non-funny\footnote{See Section \ref{uniqueness_cone_section} for the definitions.}, then $C$ has a unique singular point, whose multiplicity is $d-1$.  
\label{cone_uniqueness_prop}
\end{Prop}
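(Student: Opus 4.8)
The plan is to first establish the cone structure and only afterwards extract the singular point of $C$. Write $g=g_d+g_{d-1}+\dots$ for the decomposition into homogeneous parts, so $g_d$ is the leading form and $C=V(g_d)\subset V(x_0)=\P^2$. I would begin with the directions $[0:v]$ with $g_d(v)\neq 0$, i.e.\ those not on $C$; these form a dense open subset of $\P^2$. For such a $v$ every line $\ell$ in direction $v$ meets $V(g)$ in $d$ points counted with multiplicity, all affine (nothing escapes to infinity since $[0:v]\notin V(G)$), so $\ell\subset E$ forces these $d$ points into the finite set $\{R_1,\dots,R_s\}$. Discarding the finitely many directions spanned by pairs $R_i,R_j$, such a line passes through a single $R_i$ and hence meets $V(g)$ only there, with multiplicity $d$. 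Expanding $g$ at $R_i$, this says exactly that the homogeneous parts of $g$ at $R_i$ of degrees $\operatorname{mult}_{R_i}V(g),\dots,d-1$ vanish at $v$. Thus every off-$C$ direction lies in $\bigcup_i W_i$, where $W_i\subset\P^2$ is cut out by these forms; taking closures and using that $\P^2$ is irreducible, some $\overline{W_i}=\P^2$, so those forms vanish identically. Then $g$ equals its degree-$d$ part at $R_i$: after translating $R_i$ to the origin, $V(g)$ is a cone with vertex $P:=R_i$, $G=g_d$, and $C=V(g_d)$. Since $G$ is irreducible (being the homogenization of the irreducible $g$), $C$ is irreducible and $V(G)$ is the cone over it.

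Next I would reduce the remaining claim to a planar problem by projecting from the vertex. Let $\rho\colon\P^3\setminus\{P\}\to V(x_0)=\P^2$ be the projection; it carries the cone onto $C$ and the $R_j$ to points $Q_j:=\rho(R_j)\in C$. For $v\in C$ and a line $\ell$ in direction $v$ not through $P$, the image $\rho(\ell)$ is a line through $v$, and the points of $\rho(\ell)\cap C$ other than $v$ correspond bijectively to the affine points of $\ell\cap V(g)$ (each is where a ruling of the cone meets $\ell$). Hence $\ell\subset E$ translates into $\rho(\ell)\cap C\subseteq\{v\}\cup\{Q_1,\dots,Q_s\}$. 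The Kakeya hypothesis thus becomes the planar statement that \emph{for every $v\in C$ there is a line through $v$ meeting $C$ only inside the finite set $\{v\}\cup\{Q_j\}$}, with the $d$ intersection points (by B\'ezout) concentrated there.

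Now I would pin down the multiplicity. For generic $v\in C$ the non-funny hypothesis forbids a line meeting $C$ only at $v$ (which would be a contact-$d$ line at the generic point), so the working line must also pass through some $Q_j$; the non-flexy hypothesis forbids tangency at $v$, so $I_v(\rho(\ell),C)=1$. B\'ezout then forces $I_{Q_j}(\rho(\ell),C)=d-1$. As $v$ ranges over a dense subset of the irreducible curve $C$, the index $j$ may be taken constant, so a generic line of the pencil through the corresponding $Q_j$ meets $C$ at $Q_j$ with multiplicity $d-1$; since a generic line through a point meets $C$ there with multiplicity exactly $\operatorname{mult}_{Q_j}C$, we conclude $\operatorname{mult}_{Q_j}C=d-1$.

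Finally, uniqueness is pure B\'ezout: two points of multiplicity $d-1$ would force the line joining them to meet $C$ in $\ge 2(d-1)>d$ points, and a second singular point (multiplicity $\ge 2$) together with $Q_j$ would give $\ge(d-1)+2>d$, both impossible for an irreducible degree-$d$ curve once $d\ge 3$. Hence $Q_j$ is the unique singular point, of multiplicity $d-1$. I expect the main obstacle to be the third step: making precise, in arbitrary characteristic, that for the generic point of $C$ the Kakeya line is transverse at $v$ and has contact exactly $d-1$ at a single $Q_j$. This is precisely where the non-flexy and non-funny hypotheses are indispensable, since in characteristic $p$ a curve may have every point a flex or may be strange, and such pathologies are exactly what those hypotheses exclude.
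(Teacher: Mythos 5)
Your overall strategy coincides with the paper's: locate a point of multiplicity $d$ among the $R_i$ by a covering/pigeonhole argument on the off-$C$ directions, deduce that $V(G)$ is the cone over the irreducible $C$ with that point as vertex, then analyse the Kakeya lines in directions $v\in C$ to produce a point of multiplicity $d-1$, and finish with B\'ezout for uniqueness. Your two reorganizations are sound: covering $\P^2$ by the closed sets $W_i$ and invoking irreducibility is equivalent to the paper's pigeonhole on $2^{\{R_1,\dots,R_s\}}$ combined with its Lemma \ref{int_mult_singularity_mult}, and reducing the second half to a planar statement about $C$ by projecting from the vertex is a clean repackaging of what the paper does directly on the cone (one should also note, as you implicitly do in the first step, that for generic $v$ the working line meets only one $Q_j$, since lines through two of the $Q_j$ account for finitely many $v$).

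There is, however, one concrete error in your third step: you have swapped the roles of the two hypotheses. It is the \emph{non-flexy} hypothesis that forbids a line meeting $C$ only at a generic $v$: at a smooth non-flexy point the expansion $g=x+\alpha x^2+\beta xy+\gamma y^2+\cdots$ with $\gamma\neq 0$ shows that no line through $v$ has contact $\geq 3$ with $C$ (the paper's Lemma \ref{unique_line_int_mult_at_least_3}), so a contact-$d$ line at $v$ is impossible once $d\geq 3$ and the working line must pick up some $Q_j$. Non-funniness does not give this --- it is a statement about all tangent lines passing through a common point, not about the order of tangential contact, and the two conditions are genuinely independent (a smooth conic in characteristic $2$ is funny yet non-flexy). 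Conversely, it is the \emph{non-funny} hypothesis, not non-flexiness, that yields $I_v(\rho(\ell),C)=1$: the working line is the line joining $v$ to the fixed point $Q_j$, and what you need is that for all but finitely many $v\in C$ this line is not tangent to $C$ at $v$ --- exactly the consequence of non-funniness recorded in the paper. Non-flexiness only bounds the contact at $v$ by $2$ and says nothing about whether $vQ_j$ happens to be the tangent line. Since both hypotheses are in force, your argument is repaired simply by exchanging the two citations, but as written both justifications are invalid.
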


In particular, given an irreducible
hypersurface $V(g)\subset\A^3,$ unless $g$ satisfies the very stingy requirements described in the statement above, it is not possible to add finitely many points $R_1,...,R_s$ to $V(g)$ so that $\{R_1,...,R_s\}\cup\{g\neq 0\}$ is a Kakeya subset. 

Next, in Section \ref{Kakeya_subs_section}, we
consider a subvariety $X\subset\A^N$ with $\dim X=n$. Let $V(x_0)=\P^N-\A^N$ be the hyperplane at infinity, and let 
$\Delta\subset V(x_0)$ be the set of all directions of lines contained in $X$. We say that $X$ is a Kakeya subvariety of $\A^N$ if the
inequality $\dim\Delta\leq n-1$ is an equality. As an example, we prove that any hypersurface $X\subset\A^N$ of degree $d<N$ is Kakeya in this sense. Next, we
propose the following

\begin{Def}
A subvariety $X\subset\A^N_k$ of dimension $n$, together with a morphism $\pi:X\to\A^n_{z_1,...,z_n}$ is called a Kakeya cover if there exists an open subset $U\subset V(z_0)=\P^n_{[z_0:...:z_n]}-\A^n_{z_1,...,z_n}$ with the following property: for any $v\in U,$ there
exists a line $l\subset X\subset \A^N_k$ whose image under $\pi$ is a line in $\A^n$ in direction $v$. 
\label{def_linear_Kakeya_cover}
\end{Def}

The smallest known example of a combinatorial Kakeya subset of $\F_q^n$ comes from
\[\{(a_1,...,a_{n-1},b)\in\F_q^n \ |\ a_i+b^2\ \text{is a square in $\F_q$ for all $i$}\}\subset\F_q^n   \]
(say $q$ is odd, for convenience); see Appendix A in \cite{SarafSudan}. 
This is the image on $\F_q$-points of the composition
\[
X=V\left(a_1+b^2-c_1^2,...,a_{n-1}+b^2-c_{n-1}^2\right)
\into \A^{2n-1}_{a_1,...,a_{n-1},b,c_1,...,c_{n-1}}\to \A^n_{a_1,...,a_{n-1},b}
\]
which is a Kakeya cover as in Definition \ref{def_linear_Kakeya_cover}. Indeed, take $U=\{b\neq 0\}$ and for any direction $v=[\alpha_1:...:\alpha_{n-1}:1]\in U$, consider the line
$a_i=\alpha_i t+\frac{\alpha_i^2}{4}, b=t, c_i=t+\frac{\alpha_i}{2}.$ It is contained in $X$, and its image under $X\to\A^n$ is a line in direction $v$.
This justifies the significance of Definition \ref{def_linear_Kakeya_cover}. 

It is easy to prove the following 

\begin{Prop}
Let $X\subset\A^N$ be an irreducible $n$-dimensional Kakeya subvariety of $\A^N$, and let $d$ be the degree of its closure $\overline{X}\subset\P^N$. After performing $\text{codim} X$ appropriate linear projections, we obtain a finite map $\pi: X\to\A^n_{z_1,...,z_n}$ of degree $d$, which is a Kakeya cover. 
\label{Kakeya_sub_gives_cover_Prop}
\end{Prop}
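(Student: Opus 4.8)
The plan is to realize $\pi$ as a single linear projection whose center $L$ lies in the hyperplane at infinity $V(x_0)$, and to choose $L$ generically so that the one center simultaneously forces $\pi$ to be finite of degree $d$ and forces the induced projection of directions to be dominant. Concretely, a surjective linear map $\pi\colon\A^N\to\A^n$, $z_j=\sum_i c_{ji}x_i$, homogenizes (with $z_0=x_0$) to a projection $\overline{\pi}\colon\P^N\to\P^n$ defined away from the center $L=V(x_0,z_1,\dots,z_n)$, a linear subspace of dimension $N-n-1=\codim X-1$ contained in $V(x_0)$. Such a projection is exactly a composite of $\codim X$ point-projections, so choosing it is the same as performing $\codim X$ linear projections. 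Since $z_0=0$ is equivalent to $x_0=0$, the preimage of $V(z_0)$ meets $\overline{X}$ exactly in $\overline{X}\cap V(x_0)$, and the restriction of $\overline{\pi}$ to $V(x_0)$ is precisely the induced map on directions $\bar{\pi}\colon V(x_0)\to V(z_0)$, itself the projection from $L$ inside $V(x_0)\cong\P^{N-1}$.

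Next I would pin down the genericity of $L$. Inside $V(x_0)\cong\P^{N-1}$ there are two relevant closed subsets: $\overline{X}\cap V(x_0)$, of dimension at most $n-1$ because $\overline{X}\not\subset V(x_0)$; and $\overline{\Delta}$, of dimension $n-1$ by the Kakeya hypothesis ($\Delta$ is constructible, being the image of the incidence variety of lines contained in $X$). A linear subspace $L$ of dimension $N-n-1$ avoids a fixed closed set of dimension $\le n-1$ in $\P^{N-1}$ as soon as $(n-1)+(N-n-1)=N-2<N-1$, which always holds; hence a generic $L\subset V(x_0)$ is disjoint from $(\overline{X}\cap V(x_0))\cup\overline{\Delta}$. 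I fix such an $L$.

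The disjointness $L\cap\overline{X}=\emptyset$ makes $\overline{\pi}|_{\overline{X}}\colon\overline{X}\to\P^n$ a finite surjective morphism, and for a generic center its degree is $d=\deg\overline{X}$; restricting the center to lie in $V(x_0)$ does not change this, since the $(N-n)$-planes spanned by such centers and points of $\A^n$ already exhaust the generic $(N-n)$-planes meeting $\overline{X}$ in $d$ reduced points. Because $\overline{\pi}^{-1}(V(z_0))\cap\overline{X}=\overline{X}\cap V(x_0)$, the affine part $X=\overline{X}\setminus V(x_0)$ is carried into $\A^n$, and $\pi\colon X\to\A^n$ is finite of degree $d$.

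Finally, the disjointness $L\cap\overline{\Delta}=\emptyset$ makes $\bar{\pi}|_{\overline{\Delta}}\colon\overline{\Delta}\to V(z_0)\cong\P^{n-1}$ a finite morphism between irreducible varieties of the same dimension $n-1$, hence surjective; as $\Delta$ is dense in $\overline{\Delta}$, its image $\bar{\pi}(\Delta)$ contains a dense open $U\subset V(z_0)$. For $v\in U$ I choose $w\in\Delta$ with $\bar{\pi}(w)=v$ and a line $l\subset X$ in direction $w$; since $w\notin L$, the projection does not contract $l$, so $\pi(l)$ is a genuine line in $\A^n$ of direction $\bar{\pi}(w)=v$. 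This verifies Definition \ref{def_linear_Kakeya_cover}. The step I expect to be the crux is the coordination in the second and fourth paragraphs: arranging that a \emph{single} generic center $L$ avoids both $\overline{X}$ and $\overline{\Delta}$, and then upgrading surjectivity of $\bar{\pi}|_{\overline{\Delta}}$ onto $V(z_0)$ to a statement about the constructible set $\Delta$ and about honest (non-contracted) lines, so that the lifted directions really come from lines in $X$ that stay lines after projection.
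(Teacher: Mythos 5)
Your proof is correct, but it takes a genuinely different route from the paper's. The paper performs the $\codim X$ point projections one at a time: it projects from a general point $P\in V(x_0)-\overline{X}$, notes that the induced map $\Delta\to\Delta'$ on direction sets has finite fibers (because $P\notin\overline{\Delta}\subset\overline{X}$), concludes that the image is again an $n$-dimensional Kakeya subvariety of $\A^{N-1}$, and iterates until the image is a hypersurface, at which stage the final projection is finite of degree $d$ and $\pi(\Delta)$ is dense in the target hyperplane at infinity. You instead choose a single linear center $L\subset V(x_0)$ of dimension $N-n-1$ generically so that it misses both $\overline{X}\cap V(x_0)$ and $\overline{\Delta}$ simultaneously; the dimension count $(n-1)+(N-n-1)<N-1$ is right, and this choice buys you finiteness, degree $d$, and the non-contraction of Kakeya lines (via $\overline{l}\cap L\subset\{w\}$ and $w\notin L$) all in one stroke, without having to track the birationality of intermediate projections that the paper invokes for $n<N-1$. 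The paper's iteration yields the extra information that each intermediate image is itself a Kakeya subvariety, which your argument does not record. Two minor slips that do not affect the conclusion: $\overline{\Delta}$ need not be irreducible (nor pure-dimensional), but surjectivity of $\overline{\Delta}\to V(z_0)$ still follows because a finite morphism has closed image of dimension $n-1$ inside the irreducible $\P^{n-1}$; and the degree statement needs no genericity beyond $L\cap\overline{X}=\emptyset$, since then $\deg\bigl(\pi|_{\overline{X}}\bigr)\cdot\deg(\P^n)=\deg\overline{X}=d$ by the projection formula.
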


In \cite{Slavov_alg_geom_version}, we study covers which satisfy a more restrictive version of  Definition \ref{def_linear_Kakeya_cover}.

Sections \ref{weak_Kakeya_section} and \ref{Kakeya_subs_section} are independent of one another and present different viewpoints towards a Kakeya problem in the context of algebraic geometry. 
Throughout the article, $k$ is a fixed algebraically closed field.

\section{Constructible Kakeya subsets}
\label{weak_Kakeya_section}

\subsection{The geometry of constructible Kakeya subsets}
\label{geom_Kakey_vars_section}

\begin{proof}[Proof of Proposition \ref{dim_Kakeya}a.]
Suppose that $\dim E\leq n-1$. Replacing $E$ by its closure, we can assume without loss of generality that $E\subset\A^n_k$ is a closed subset. Let $V(x_0)=\P^n-\A^n$ be the hyperplane at infinity; the direction of a line in $\A^n$ is a point in $V(x_0)$.  
Let $v\in V(x_0)$ be arbitrary. We know that there exists some line $l$ in $\A^n_k$ which is contained in $E$ and whose projective closure $\overline{l}$ passes through $v$. Taking closures in $\P^n_k,$ the inclusion $l\subset E$ implies $\overline{l}\subset\overline{E}$ hence $v\in\overline{E}$. Since $v$ was arbitrary, we deduce that $V(x_0)\subset \overline{E}$. However, since $\dim \overline{E}\leq n-1,$ this is possible only if $\dim\overline{E}=n-1,$ and $V(x_0)$ is one of its irreducible components. This is impossible, since $E\subset\A^n_k,$ and hence cannot be a dense subset of $V(x_0)$. 
\end{proof}

\begin{Rem}
We can compare and parallel the above proof with Dvir's proof of the Kakeya problem in the finite field setting. Here $\overline{E}$ plays the role of the hypersurface in Dvir's proof. The requirement $\deg f=d<q$ was needed in Dvir's work to derive a contradiction from $V(x_0)\subset V(\overline{f})$. Here in the geometrical setting, this is automatic. So, our proof of Proposition \ref{dim_Kakeya}a is a geometric version of Dvir's argument.  
\end{Rem}

\begin{proof}[Proof of Proposition \ref{dim_Kakeya}b.]
Let $Z=\A^n_k-E$. Suppose first that $\dim Z\leq n-2$. To show that $E$ is a Kakeya subset, consider an arbitrary $v\in V(x_0)$. 
Let $\G_v$ be the subset of the Grassmanian $\G(1,n)$ consisting of all lines in $\P^n$ passing through $v$. Consider the incidence correspondence
\[I:=\{ (x,l)\in Z\times\G_v\ |\ x\in l\}\subset Z\times\G_v,\]
together with its two projections to $Z$ and $\G_v$.
Each fiber of $I\to Z$ is $0$-dimensional, so $\dim I=\dim Z\leq n-2.$ Since $\dim\G_v=n-1,$ there is a dense open subset of $\G_v$ where every point is outside of the image of $I\to\G_v$. Any such line will be entirely contained in $E$. 

Conversely, suppose that $\dim Z=n-1$. We claim that $E$ is not a Kakeya subset. Let $\overline{Z}$ be the closure of $Z$ in $\P^n$. Note that 
$V(x_0)\not\subset \overline{Z}$.
 Let $v\in V(x_0)-\overline{Z}$. If 
 $\overline{l}=l\cup\{v\}\in\G_v$, then $\overline{l}$ must intersect $\overline{Z}$ at some point $w$, by Bezout's theorem. Since $w\neq v$, we have $w\in\overline{Z}\cap\A^n=Z$, and so $l$ is not contained in $E$.  
\end{proof}

\subsection{Examples of small Kakeya subsets of type $1$}
\label{examples_section}

When $d=1$, so $g=L$ is linear, the smallest such Kakeya set would have to be $\{q\}\cup\{L\neq 0\}$ where $q$ is a point on the hyperplane $V(L)$. In fact, for any $q\in V(L),$ the subset $E=\{q\}\cup\{L\neq 0\}$ is a Kakeya subset of $\A^n$.

\begin{Prop}
For any $d\geq 1$, there exists a Kakeya set $E\subset\A^n_k$ of the form $E=R\cup \{g\neq 0\},$ where $g\in k[x_1,...,x_n]$ is irreducible of degree $d$, and
$R$ is a closed subset of $V(g)$ of dimension $n-2.$ 
\label{example_dim_n_2}
\end{Prop}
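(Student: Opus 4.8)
The plan is to take $V(g)$ to be an irreducible \emph{cone}. Concretely, I choose $g\in k[x_1,\dots,x_n]$ homogeneous of degree $d$ and irreducible (for $n\geq 3$ a generic such form defines a smooth, hence irreducible, hypersurface in $\P^{n-1}$ of dimension $n-2$; the linear case $d=1$, and the case $n=2$, are the elementary situations already discussed). Then $V(g)\subset\A^n$ is an irreducible cone with vertex the origin $O$, and the directions $v\in V(x_0)=\P^{n-1}$ with $g(v)=0$ form a set $C$ of dimension $n-2$. The entire point of choosing a cone is that homogeneity gives $g(tv)=t^d g(v)$ for every $v$ and every scalar $t$, which lets me control how a line meets $V(g)$ according to whether its direction lies on $C$ or not.

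I would split the directions into two families. First, if $v\notin C$, i.e. $g(v)\neq 0$, then the line through the origin $\{tv : t\in k\}$ meets $V(g)$ only at $O$, since $g(tv)=t^d g(v)$ vanishes only at $t=0$. Thus the single point $O$ simultaneously handles every direction off $C$, and these form the bulk of $\P^{n-1}$ (the complement of a hypersurface). Second, and this is the key step, I fix once and for all a point $p_0$ with $g(p_0)\neq 0$ and, for each $v\in C$, consider the line $\ell_v=\{p_0+tv : t\in k\}$. Because $v\in C$ means $g(v)=0$, homogeneity gives $g\bigl((p_0+tv)-p_0\bigr)=g(tv)=t^d g(v)=0$ for all $t$; hence $\ell_v$ lies entirely in the \emph{translated} cone $S:=V\bigl(g(\,\cdot\,-p_0)\bigr)$. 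Consequently $\ell_v\cap V(g)\subset S\cap V(g)$, a set independent of $v$. Since $g(p_0)\neq 0$, the line $\ell_v$ is not contained in $V(g)$, so this intersection is finite.

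I then set $R:=\bigl(S\cap V(g)\bigr)\cup\{O\}$ and $E:=R\cup\{g\neq 0\}$. By the two cases above, for every direction $v$ there is a line in direction $v$ all of whose intersections with $V(g)$ lie in $R$, so $E$ is Kakeya. It remains to check that $R$ is closed in $V(g)$ of dimension $n-2$. It is closed, being a finite union of closed subsets of $V(g)$. For the dimension, note $S\cap V(g)=V(g)\cap V(h)$ with $h(x):=g(x)-g(x-p_0)$ of degree $\leq d-1$; as $V(g)$ and $V(h)$ are distinct hypersurfaces their intersection has pure dimension $n-2$ (a direct computation, e.g. with $g=\sum x_i^d$, confirms it is nonempty of the expected dimension), and adjoining the point $O$ does not change this. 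If one prefers to avoid analyzing $S\cap V(g)$ altogether, one may simply enlarge $R$ by an arbitrary hyperplane section of $V(g)$ to force dimension exactly $n-2$; enlarging $R$ preserves the Kakeya property.

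The heart of the argument — and the main obstacle — is handling the whole $(n-2)$-dimensional family $C$ of asymptotic directions at once. For a cone these are precisely the directions for which lines through the vertex degenerate into rulings lying inside $V(g)$, so the vertex is useless for them. The trick is that a single auxiliary point $p_0$ works for all of $C$ simultaneously, because homogeneity forces each line $p_0+tv$ with $v\in C$ onto the fixed translated cone $S$, confining all its intersections with $V(g)$ to the one $(n-2)$-dimensional set $S\cap V(g)$. The only remaining technical point, the exact dimension of $S\cap V(g)$, is minor and can be bypassed by padding $R$.
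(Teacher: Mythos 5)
Your construction is essentially identical to the paper's: the paper takes the projective cone $X$ over a smooth degree-$d$ hypersurface $C\subset V(x_0)$ with vertex at the origin, a second cone $X'$ over the same $C$ with vertex $v'\notin X\cup V(x_0)$, and sets $R=X\cap X'$ plus the vertex --- and your translated cone $S=V\bigl(g(\,\cdot\,-p_0)\bigr)$ is exactly that second cone with vertex $p_0=v'$. The argument (vertex handles directions off $C$, the ruling of the second cone through a direction on $C$ confines all intersections with $V(g)$ to the fixed $(n-2)$-dimensional set $X\cap X'$) is the same, so your proof is correct and matches the paper's.
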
  

\begin{proof}
Consider a smooth irreducible hypersurface $C\subset V(x_0)\simeq \P^{n-1}$ of degree $d$, and let $X=V(G)\subset\P^n$ be the projective cone over it, with vertex $v=[1:0:...:0].$  
Choose a point $v'\in\P^n-X-V(x_0)$ and let $X'$ be the cone over $C$ but with vertex $v'.$ Let $R=X\cap X'$; it is an $(n-2)$-dimensional closed subvariety of $X$. Then
$E=\left(\{v\}\cup R\cup \{G\neq 0\}\right)\cap \A^n\subset\A^n_k$ is a Kakeya subset of $\A^n_k$. 
\end{proof}

\begin{Lem}
Let $X\subset\P^3_k$ be the cone over some irreducible curve $C\subset V(x_0)$, with vertex $v=[1:0:0:0]$.
 Suppose that $C$ is not a line (equivalently, that $X$ is not a plane). If a line $l\subset\P^3$ is contained in $X$, then the vertex $v$ of $X$ belongs to $l$.
\label{triv_vertex}
\end{Lem}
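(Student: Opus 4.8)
The plan is to use the linear projection away from the vertex. Let $\pi\colon\P^3-\{v\}\longto V(x_0)\simeq\P^2$ be the projection from $v=[1:0:0:0]$, given in coordinates by $[x_0:x_1:x_2:x_3]\mapsto[x_1:x_2:x_3]$. The fibers of $\pi$ are exactly the punctured lines through $v$: the fiber over a point $p\in\P^2\simeq V(x_0)$ is the ruling line $\overline{vp}$ with $v$ removed. Since $X$ is the cone over $C$, it is the union of the rulings $\overline{vp}$ for $p\in C$; consequently $\pi$ carries $X-\{v\}$ onto $C$, and in particular $\pi(X-\{v\})\subseteq C$.

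Now suppose, toward a contradiction, that $l\subset X$ is a line with $v\notin l$. Then $\pi$ is defined along the whole of $l$, so $\pi|_l\colon l\longto\P^2$ is a morphism from $l\simeq\P^1$. I would first show it is non-constant: were $\pi|_l$ constant with value $p$, every point of $l$ would lie on $\overline{vp}$, forcing $l=\overline{vp}$ and hence $v\in l$, a contradiction. Writing $l$ in a linear parametrization $[s:t]\mapsto[\ell_0:\ell_1:\ell_2:\ell_3]$ with each $\ell_i$ a linear form in $s,t$, the composite $\pi|_l$ is $[s:t]\mapsto[\ell_1:\ell_2:\ell_3]$, again given by linear forms; being non-constant, its image is a line $l'\subset\P^2$.

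Finally, $l\subset X$ gives $l'=\pi(l)\subseteq C$. Thus $l'$ is a line contained in the irreducible curve $C$; comparing dimensions forces $l'=C$, so $C$ itself is the line $l'$. This contradicts the hypothesis that $C$ is not a line, and therefore every line $l\subset X$ must pass through $v$.

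I expect the only delicate point to be the dichotomy for $\pi|_l$: one must explicitly exclude the possibility that $\pi$ collapses $l$ to a point (equivalently, that $l$ is itself a ruling of the cone through $v$) before concluding that $\pi(l)$ is a genuine line. It is precisely this line-ness, set against the irreducibility and non-linearity of $C$, that delivers the contradiction; everything else reduces to the standard fact that projection from a point sends a line not through that point linearly onto a line.
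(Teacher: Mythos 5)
Your proof is correct and follows essentially the same route as the paper: project from the vertex onto $V(x_0)$, observe the image of $l$ is a line contained in $C$, and invoke irreducibility of $C$ to get $l'=C$, contradicting that $C$ is not a line. The extra care you take in ruling out a constant projection and verifying linearity of $\pi|_l$ only fleshes out what the paper asserts in passing.
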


\begin{proof}
Consider the projection map $\P^3-\{v\}\to V(x_0)$. If $l$ does not pass through $v$, then its image is well--defined and is a line $l'$ in $V(x_0)\simeq\P^{2}.$ By the definition of a cone
as the union of all lines connecting points of $C$ with $v$, we must have $l'\subset C$. By the irreducibility of $C$, this would yield $l'=C$, which is a contradiction to the hypothesis.
\end{proof}

When $d=2$, so $g=Q$ is irreducible quadratic, again a Kakeya type-$1$ set $E=R_1\cup\dots\cup R_s\cup \{Q\neq 0\}$ in $\A^3$ would have to contain at least one point of $V(Q)$. In fact, adding just one point suffices, if $Q$ is chosen appropriately: 
we now give an example of such a Kakeya set $E=\{q\}\cup\{Q\neq 0\}$. Let $C$ be a smooth quadric hypersurface in $V(x_0)\simeq\P^{2}$ defined by $Q\in k[x_1,...,x_3],$
 and let $V(Q)\subset\P^3$ be the cone over $C$, with vertex at $v=[1:0:0:0].$ Then $E=\{v\}\cup\{Q\neq 0\}$ is a Kakeya subset of $\A^3$. 
 
One naturally asks for a refinement of this example. For a given $d$, we want to find an irreducible hypersurface $V(g)\subset\A^3_k$ of degree $d$ and a subset $R\subset V(g)$ of dimension as small as possible, so that $R\cup\{g\neq 0\}$ is a Kakeya subset of $\A^3_k$. 
Indeed, an extreme type-$1$ Kakeya subset of $\A^3$ as described in Proposition \ref{example_cone_3_points} exists, for any $d\geq 3.$

\begin{proof}[Proof of Proposition \ref{example_cone_3_points}]
Consider $C=V(y^{d-1}z-x^d)\subset\P^2\simeq V(x_0),$ and let $X=V(y^{d-1}z-x^d)\subset\P^3_{[x_0:x:y:z]}$ be the cone over it. Let $R_1=[1:0:0:0]$ be the vertex of the cone, and let $R_2=[1:0:0:1]$ be a point which projects onto the singular point of $C$.
Note that the multiplicity of $R_1$ is $d$ and the multiplicity of $R_2$ is $d-1$.
 
 Consider any $x\in V(x_0).$ If $x\notin C$, the line joining $x$ and $R_1$ does not intersect $V(G)$ besides at $R_1$. If $x\in C$ is a smooth point, the line joining $x$ and $R_2$ does not hit $V(G)$ again. Finally, if $x=[0:0:0:1]$ is the singular point of $C$, then the
 line joining $x$ and, say, $[1:1:0:1]$ does not intersect $V(G)$ again (besides at $x$).  
\end{proof}

\subsection{Uniqueness of the cone construction}
\label{uniqueness_cone_section}

For a polynomial $g\in k[x_1,...,x_n]$ and a point $p\in\A^n_k,$ we say that $g$ vanishes at $p$ with multiplicity $m$ if the polynomial $g(x+p)$ has no terms of total degree less than $m$ but has some nonzero terms of
degree precisely $m$. For a line $l=\{p+tv\ |\ t\in k\}$ passing through $p$, recall that the intersection multiplicity $I_p(l,V(g))$  equals the order of vanishing at $t=0$ of $g(p+tv)\in k[t]$. 

\begin{Lem}
Let $g\in k[x_1,...,x_n]$ be a polynomial of degree $d$ and let $p\in V(g)\subset\A^n_k.$ Let $\G_p\simeq\P^{n-1}$ be the variety of lines in $\A^n_k$ (or, equivalently, in $\P^n_k=\A^n_k\cup V(x_0)$) passing through $p$. Suppose that there exists a 
dense subset $\L\subset\G_p$ such that for each $l\in \L$, we have $I_p(l,V(g))=d.$ Then the multiplicity of the point $p\in V(g)$ is exactly $d$. 
\label{int_mult_singularity_mult}
\end{Lem}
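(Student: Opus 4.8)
The plan is to translate the hypothesis into a statement about the homogeneous components of $g$ and then invoke Zariski density. After translating we may assume $p=0$, and we write $g=g_m+g_{m+1}+\dots+g_d$, where each $g_i$ is the homogeneous part of degree $i$, where $g_d\neq 0$ since $\deg g=d$, and where $m$ is by definition the multiplicity of $p$, so $g_m\neq 0$. The goal is to show $m=d$, equivalently that $g_i\equiv 0$ for every $i<d$.

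First I would record how the intersection multiplicity reads off the components. A line $l$ through $p=0$ in direction $v\in k^n\setminus\{0\}$ gives $g(tv)=\sum_{i=m}^{d} t^i g_i(v)$, so $I_p(l,V(g))$ equals the least index $i$ with $g_i(v)\neq 0$. Identifying $\G_p\simeq\P^{n-1}$ with the space of directions $v$, the hypothesis $I_p(l,V(g))=d$ for $l\in\L$ says precisely that, for each such direction $v$, one has $g_i(v)=0$ for all $i<d$ (and also $g_d(v)\neq 0$, which will not be needed).

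The crux is the density step. Fix an index $i<d$. By the previous paragraph the homogeneous form $g_i$ vanishes at every direction in $\L$. Its zero locus in $\P^{n-1}$ is closed and contains the dense set $\L$, hence equals all of $\P^{n-1}$; since $k$ is algebraically closed and therefore infinite, a form vanishing identically on $k^n$ must be the zero polynomial, so $g_i\equiv 0$. Applying this to every $i<d$ leaves $g=g_d$, a nonzero homogeneous polynomial of degree $d$, whence the multiplicity of $p$ is exactly $d$.

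I expect the only genuine subtlety to lie in the bookkeeping of the first two paragraphs, namely the clean passage between the intersection multiplicity along a line and the vanishing of the intermediate forms $g_i$ at the corresponding direction; once that dictionary is in place, the density argument is immediate, and the fact that $\deg g=d$ guarantees $g_d\neq 0$, which is what rules out any degenerate conclusion with $m<d$.
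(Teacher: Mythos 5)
Your proof is correct and follows essentially the same route as the paper's: expand $g$ into homogeneous components at $p$, observe that restricting to a line $t\mapsto tv$ turns the intersection multiplicity into the least index $i$ with $g_i(v)\neq 0$, and then use density of $\L$ in $\P^{n-1}$ to force $g_1,\dots,g_{d-1}$ to vanish identically. The only cosmetic difference is that you name the multiplicity $m$ up front and argue $m=d$, whereas the paper simply concludes $g_i=0$ for $i<d$; the substance is identical.
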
      

\begin{proof}
Without loss of generality, $p=(0,...,0)$. Write $g=g_1+g_2+\dots+g_d,$ where $g_i$ is homogeneous of degree $i$. A line through $p$ has the form $\{t(a_1,...,a_{n})\ |\ t\in k\}$ for a uniquely determined $[a_1:...:a_{n}]\in\P^{n-1}$.  Expanding $g$ along such a line, we obtain $g(t(a_1,...,a_{n}))=tg_1(a_1,...,a_n)+t^2g_2(a_1,...,a_n)+\dots+t^d g_d(a_1,...,a_n).$ The given condition now implies that $g_1,...,g_{d-1}$ all vanish on a dense subset $\L$ of $\P^{n-1}.$  Then $\overline{\L}\subset V(g_i)$ for each $i=1,...,d-1$, hence $g_i=0$.  
\end{proof}

Let $p\in V(g)$ be a smooth point on a hypersurface $V(g)$ in $\A^n_k$. Change coordinates so that $p=(0,...,0)$ and expand $g$ near $p$ as $g=g_1+g_2+\dots,$ where $g_i$ is homogeneous of degree $i$
(note that $g_1\neq 0$). We say that $p$ is a flexy point if $g_1|g_2.$ This is a closed condition, so the subset of 
$V(g)_{\text{smooth}}$ consisting of its flexy points is either a proper closed subset of $V(g)_{\text{smooth}}$, or all of $V(g)_{\text{smooth}}$ 
(the latter case can happen only when $k$ has positive characteristic; then $V(g)$ is called a flexy hypersurface). 

An irreducible plane curve $C\subset\P^2_k$ is called ``funny" if there is a point $p_0\in\P^2$ such that for all points $p\in C$, the line joining $p$ and $p_0$ is tangent to $C$ at $p$. If $C$ is irreducible and is
non--funny, then for any point $p_0$, there are only finitely many points $p\in C$ for which the line joining $p$ and $p_0$ is tangent to $C$ at $p$.

\begin{Lem}
Let $C=V(G)\subset \P^2\simeq V(x_0)$ be an irreducible curve, and let $X=V(G)\subset\P^3$ be the cone over it, with vertex $v=[1:0:0:0].$ Let $p$ be a smooth non-flexy point on $C$. 
Then the line joining $p$ and $v$ is the only line $l\subset\P^3$ passing through $p$ which satisfies $I_p(l,X)\geq 3.$
\label{unique_line_int_mult_at_least_3}
\end{Lem}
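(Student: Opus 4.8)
The plan is to choose an affine chart in which the equation of the cone becomes independent of the coordinate dual to the vertex, thereby reducing the computation of $I_p(l,X)$ for lines $l$ through $p$ to an intersection--multiplicity computation for the plane curve $C$ itself. To set this up, I would first normalize: after a linear change of the coordinates $x,y,z$ (which preserves both the cone structure and the vertex $v=[1:0:0:0]$, and acts projectively on $V(x_0)\simeq\P^2$), assume $p=[0:0:0:1]$. Then work in the affine chart $z\neq 0$ with coordinates $(\xi,\eta,\zeta)=(x_0/z,\,x/z,\,y/z)$, so that $p$ is the origin and no line through $p$ lies in the hyperplane at infinity of this chart. Since $G$ is the homogenization of the equation of $C\subset V(x_0)$ and hence does not involve $x_0$, the cone $X$ is cut out in this chart by $g(\eta,\zeta):=G(\eta,\zeta,1)$, a function independent of $\xi$; moreover $g$ is precisely the local equation of $C$ in the plane $\{\xi=0\}$, and the ruling $\overline{pv}$ is the $\xi$--axis.

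The key step is the observation that, because $g$ does not involve $\xi$, the intersection multiplicity is insensitive to the $\xi$--component of the direction of $l$. Concretely, parametrizing a line through $p$ with direction $(\alpha,\beta,\gamma)$ as $t\mapsto t(\alpha,\beta,\gamma)$ and restricting (as in the expansion used in Lemma \ref{int_mult_singularity_mult}), one obtains $g(t\beta,t\gamma)$, whose order of vanishing at $t=0$ depends only on $(\beta,\gamma)$. If $(\beta,\gamma)=(0,0)$ then $l$ is the ruling $\overline{pv}\subset X$, so $I_p(l,X)=\infty$. Otherwise $(\beta,\gamma)$ determines a line $\bar l$ through the origin in the $(\eta,\zeta)$--plane, and $I_p(l,X)=I_p(\bar l,C)$, the intersection multiplicity of $\bar l$ with the plane curve $C$ at $p$.

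Finally I would invoke the non--flexy hypothesis. At the smooth point $p$ of $C=V(g)$, write $g=g_1+g_2+\cdots$ with $g_1\neq 0$; a line $\bar l$ through $p$ meets $C$ with multiplicity $1$ unless it is the tangent $\{g_1=0\}$, and it meets the tangent with multiplicity $\geq 3$ exactly when $g_2$ vanishes on $\{g_1=0\}$, i.e.\ when $g_1\mid g_2$ --- which is precisely the definition of $p$ being flexy. Hence non--flexy forces $I_p(\bar l,C)\leq 2$ for every line $\bar l$ through $p$, so $I_p(l,X)\leq 2$ for every $l$ with $(\beta,\gamma)\neq(0,0)$, and $\overline{pv}$ is the unique line through $p$ with $I_p(l,X)\geq 3$.

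The only point demanding care --- the main obstacle --- is the dictionary between the algebraic flexy condition $g_1\mid g_2$ and the geometric statement that no line through a non--flex smooth point of a plane curve meets it with multiplicity $\geq 3$; this rests on the elementary fact that in two variables a quadratic form vanishing along the line $\{g_1=0\}$ is divisible by the linear form $g_1$. By contrast, the coordinate reduction that collapses the three--dimensional intersection problem to the two--dimensional one is routine once one notices that the cone's equation omits $x_0$, which is exactly what makes the argument clean.
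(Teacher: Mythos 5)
Your proposal is correct and follows essentially the same route as the paper: pass to the chart $z\neq 0$ where $p$ is the origin, observe that the cone's equation omits $x_0$ so the intersection multiplicity along a line through $p$ depends only on the projected direction in the $(x,y)$-plane, and then use the non-flexy condition $g_1\nmid g_2$ to rule out multiplicity $\geq 3$ for any non-ruling line. The paper just fixes coordinates so the tangent is $x=0$ and computes the expansion $g=x+\alpha x^2+\beta xy+\gamma y^2+\text{h.o.t.}$ with $\gamma\neq 0$ explicitly, which is your argument in concrete form.
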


\begin{proof}
Choose coordinates so that $p=[0:0:0:1]$ and so that the tangent line at $p$ to $C$ in $\P^2$ (or in $\A^2_{x,y}$) is given by $x=0.$ 
Let $g$ be the dehomogenization of $G$ with respect to $z$.
So, $g$ has the form $g=x+\alpha x^2+\beta xy+\gamma y^2+\text{h.o.t.}$ and by assumption, $\gamma\neq 0.$ Given now a line $l$ in $\A^3_{x_0,x,y}$ with $p\in l$, described by
$x_0=ta, x=tb, y=tc$ (for $t\in k$), then we have $I_p(l,V(g))\geq 3$ precisely when $b=c=0, a\neq 0$.
\end{proof}

\begin{Rem}
Let $p\in V(g)$ be a non-flexy smooth point on a hypersurface $V(g)$ in $\A^3_k$ of degree $d$.  Then the number of lines $l\subset\A^3_k$ passing through $p$ with the property that $I_p(l,V(g))\geq 3$ is either one or two. 
\end{Rem}

\begin{proof}[Proof of Proposition \ref{cone_uniqueness_prop}]
We know that $\dim C=1$. 

Let $M=2^{\{R_1,...,R_s\}}$ be the collection of all nonempty subsets of $\{R_1,...,R_s\}$. For each $x\in V(x_0)-C$, choose a Kakeya line $l_x$ through $x$ whose affine part is contained in $E$.  By the pigeonhole principle applied to the assignment $V(x_0)-C\to M$, $x\mapsto l_x\cap V(G),$ there exists a nonempty subset $S\subset\{R_1,...,R_s\}$ which is the image of infinitely many points $x\in V(x_0)-C$. The set $S$ must be singleton, say $S=\{R_1\}$. Let $R_1,...,R_{s'}$ be 
the points among $R_1,...,R_s$  with the property that for each $i=1,...,s',$ there are infinitely many points $x\in V(x_0)-C$ such that $l_x\cap V(G)=\{R_i\}$. For $i=1,...,s',$ let $\Omega_i$ be the set of all $x\in V(x_0)-C$ such that $l_x\cap V(G)=\{R_i\}$. Then the complement of
$\cup_{i=1}^{s'}\Omega_i$ in $V(x_0)-C$ is finite. Thus, $\cup_{i=1}^{s'}\Omega_i$ is dense in $V(x_0)-C$, so for some $i\in\{1,...,s'\},$ the set $\Omega_i$ is dense in $V(x_0)-C$, hence in $V(x_0)$. Say this holds for $i=1,$ and let $R=R_1.$ Then
$I_R(l_x,V(G))=d$ for $x\in\Omega_1$ by Bezout's theorem. By Lemma \ref{int_mult_singularity_mult}, 
$R$ is a point of $V(G)$ of multiplicity $d$. 

For any $x\in C$, the line joining $x$ and $R$ intersects $V(G)$ at $R$ with intersection multiplicity at least $d$, and also intersects $V(G)$ at $x$, hence Bezout's theorem implies that this line is contained in $V(G)$. 
Therefore, the entire cone over $C$ with vertex $R$ is contained in $V(G)$. Since $V(G)$ is irreducible, this implies that $V(G)$ is precisely the cone over $C$ with vertex $R$.
Choose coordinates so that $R=[1:0:0:0]$; then $G$ does not involve the variable $x_0$. Note that
  $C$ is irreducible
(if $C$ were reducible, so would be the cone over it), and $\deg C=\deg V(G)=d.$ 

Next, for any $x\in C$, there exists a Kakeya line $l_x$ passing through $x$, which intersects $V(g)$ only at points among $\{R_2,...,R_s\}$. Since $C$ is non--flexy curve, the set $U$ consisting of all (smooth) non-flexy points of $C$ is an open dense subset of $C$. For any $x\in U$ and any line $l$ through $x$ other than the one joining $x$ and $R$, we know that $I_x(l,V(G))\leq 2$
by Lemma \ref{unique_line_int_mult_at_least_3}; in particular, since $d\geq 3$, the Kakeya line $l_x$ through $x$ must intersect $V(G)$ again. 

Repeat the earlier argument, now for the assignment $U\to 2^{\{R_2,...,R_s\}}, x\mapsto l_x\cap V(g)$ to find a point in 
$\{R_2,...,R_s\}$, say $R_2,$ and a dense subset $\Omega\subset U$, such that for all 
$x\in\Omega,$ we have  $l_x\cap V(G)=\{x,R_2\}$. Let $\overline{R_2}\in C$ be the point of intersection of the line joining $R$ and $R_2$ with $V(x_0)$. Since $C$ is non--funny, there are at most finitely many smooth points $p\in C$ such that the tangent line to $C$ at $p$ passes through $\overline{R_2}$. Shrinking $U$ if necessary, we can assume that for any $x\in U$, the tangent line to $C$ at $x$ in $V(x_0)$ does not pass through $\overline{R_2},$ and for all $x\in\Omega\subset U$, we have 
$l_x\cap V(G)=\{x,R_2\}$. 
For $x\in U$, note that the line joining $x$ and $R_2$ is not contained in the tangent plane to $V(G)$ at $x$.  

For any $x\in\Omega\subset U$, the Kakeya line $l_x$ is the line joining $x$ and $R_2$, so 
$I_x(l_x,V(G))=1.$ By Bezout's theorem, $I_{R_2}(l_x,V(G))=d-1,$ for any $x\in \Omega$. 

We claim that $R_2$ is a point on $V(G)$ of multiplicity $d-1$. Say $R_2=[1:a_1:a_2:a_3]$ and set $a=(a_1,a_2,a_3)\in\A^3_k$ (these are fixed once and for all). 
Note that the dehomogenization of $G$ with respect to the first variable $x_0$ is just $G$ itself.
So, to determine the multiplicity of $R_2$ on $V(G)$, we have to examine
\[G(a+(x_1,x_2,x_3))=g_1(x_1,x_2,x_3)+\dots+ g_d(x_1,x_2,x_3),\]
where $g_i\in k[x_1,x_2,x_3]$ is homogeneous of degree $i$. 
For any $[0:v_1:v_2:v_3]\in \Omega,$ examine the intersection multiplicity at $a$ of $V(G)$ and the line $l_{a,v}$ passing through $a$ in the direction $v=[v_1:v_2:v_3].$ Note that
\[G(a+tv)=tg_1(v_1,v_2,v_3)+t^2g_2(v_1,v_2,v_3)+\dots+t^dg_d(v_1,v_2,v_3).\]
The condition that $I_a(V(G),l_{a,v})=d-1$ means that $g_i(v_1,v_2,v_3)=0$ for $i=1,...,d-2.$ 
 Thus, for $i=1,...,d-2,$ we have $\Omega\subset V(g_i)$ and since $\Omega$ is dense in $C$, we deduce $C\subset V(g_i)$ for all $i=1,...,d-2$. However, $C$ has degree $d$ while $g_i$ has
degree at most $d-2.$ Therefore, $g_i(x_1,x_2,x_3)=0$ in $k[x_1,x_2,x_3]$ for $i=1,...,d-2$. 

Since $R_2$ is a point on $V(G)$ of multiplicity $d-1,$ so is $\overline{R_2}$ on $C$. If $q\neq\overline{R_2}$ is a non-smooth point of $C$, the line joining $\overline{R_2}$ and $q$ would have to be contained in $C$, by Bezout's theorem, and so by irreducibility, $C$ would be a line (but we are in the case $d\geq 3$). Therefore, indeed, $\overline{R_2}$ is the unique singular point of $C$.   
\end{proof}

\begin{Rem} Note that this proof implies that under the assumptions of Proposition \ref{cone_uniqueness_prop}, one must have $s\geq 2$.
So, the construction in the proof of Proposition \ref{example_cone_3_points} is optimal in terms of the number of points $R_i$ that
one needs to add to $\{g\neq 0\}$ to obtain a Kakeya set. 
\end{Rem}

\section{Kakeya subvarieties of affine or projective space}
\label{Kakeya_subs_section}

Let $X\subset\A^N_{x_1,...,x_N}$ be an $n$-dimensional subvariety and let $\overline{X}\subset\P^N_{[x_0:...:x_N]}$ be its Zariski closure. Define $\Delta=\Delta(X)$ as the set of all directions of lines contained in $X$:
\[\Delta(X)=\{v\in V(x_0)\ |\ \text{there exists a line $l\subset X$ such that $\overline{l}\cap V(x_0)=\{v\}$}\}.\]
This is a constructible subset of $V(x_0)$; it is the image under the first projection of
$\{(v,l)\ |\ v\in l\}\subset V(x_0)\times F_1(X),$
where we set
$F_1(X):=(F_1(\overline{X})-F_1(\overline{X}\cap V(x_0)));$
as usual, $F_1$ stands for the Fano variety of a projective variety.

\begin{Prop} Notation as above, we have
$\dim\Delta\leq n-1.$
\end{Prop}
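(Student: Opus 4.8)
The plan is to imitate the proof of Proposition~\ref{dim_Kakeya}a and work entirely with projective closures. The essential point is the purely set-theoretic containment $\Delta\subseteq\overline{X}\cap V(x_0)$; once this is in hand, the dimension bound is immediate, and none of the finer geometry of the lines in $X$ (nor the fiber dimensions of the map $F_1(X)\to V(x_0)$) is needed.

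First I would record the containment. Fix an arbitrary $v\in\Delta$. By definition there is a line $l\subseteq X\subseteq\A^N$ with $\overline{l}\cap V(x_0)=\{v\}$. Passing to closures in $\P^N$, the inclusion $l\subseteq X$ gives $\overline{l}\subseteq\overline{X}$, and since $v\in\overline{l}$ we obtain $v\in\overline{X}$. As $v\in V(x_0)$ as well, this yields $v\in\overline{X}\cap V(x_0)$. Since $v$ was arbitrary, $\Delta\subseteq\overline{X}\cap V(x_0)$. This is exactly the affine-to-projective step used for Proposition~\ref{dim_Kakeya}a, where $V(x_0)\subseteq\overline{E}$ played the analogous role.

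Next I would bound $\dim(\overline{X}\cap V(x_0))$. The key observation is that no irreducible component of $\overline{X}$ is contained in the hyperplane $V(x_0)$: indeed $X=\overline{X}\cap\A^N$ is dense in $\overline{X}$, so every component of $\overline{X}$ is the closure of a nonempty component of $X\subseteq\A^N$ and hence meets $\A^N$. Consequently, intersecting each component of $\overline{X}$ with the hyperplane $V(x_0)$ drops its dimension by exactly one (or removes it), so $\dim(\overline{X}\cap V(x_0))\leq\dim\overline{X}-1=n-1$. Combined with the containment, $\dim\Delta\leq n-1$.

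The only step requiring any care, and the sole potential obstacle, is the claim that $V(x_0)$ contains no component of $\overline{X}$; this is precisely where the hypothesis $X\subseteq\A^N$ is used. Everything else is formal, and in particular we avoid the circular dimension counts that would arise from trying to control the incidence variety $F_1(X)$ directly: the containment $\Delta\subseteq\overline{X}\cap V(x_0)$ already encodes all the information we need.
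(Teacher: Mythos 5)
Your proof is correct and follows essentially the same route as the paper's: both rest on the containment $\Delta\subseteq\overline{X}\cap V(x_0)$ obtained by taking closures of Kakeya lines, and on the fact that $X\subseteq\A^N$ forces every component of $\overline{X}$ to meet $\A^N$. The only cosmetic difference is that the paper phrases the conclusion as a contradiction (a top-dimensional $\overline{\Delta}$ would be a component of $\overline{X}$ lying in $V(x_0)$), whereas you bound $\dim(\overline{X}\cap V(x_0))$ directly via the hyperplane-section argument.
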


\begin{proof}
For any $v\in\Delta$, there is a line $l\subset X$ whose closure contains $v$. But then $v\in\overline{l}\subset\overline{X}$ and hence $\overline{\Delta}\subset\overline{X}.$ If $\dim\Delta\geq n,$ then $\Delta$ would
have to be an irreducible component of $\overline{X}$, which is impossible since $\Delta\subset V(x_0)$ and $X\subset\A^N$. 
\end{proof}

Note that this proof is again a geometric version of Dvir's argument. 

\begin{Def}
Let $X\subset\A^N$ be an $n$-dimensional subvariety. We say that it is a Kakeya subvariety if the inequality $\dim\Delta\leq n-1$ is an equality. 
\end{Def}

\subsection{Examples coming from hypersurfaces}

Here we give as examples a class of Kakeya varieties. 

\begin{Prop} 
Let $1\leq d\leq N-1$ and let $X=V(f)\subset\A^N$ be a hypersurface of degree $d$. 
Then $X$ is a Kakeya subvariety of $\A^N$. 
\end{Prop}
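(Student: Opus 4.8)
Since $\dim X=N-1$, the assertion is exactly that $\dim\Delta=N-2$, and as the inequality $\dim\Delta\le n-1$ is already proved, it suffices to produce a family of directions of dimension $\ge N-2$. First I would pin down where the directions can live. Writing $\overline X=V(F)$ with $F$ the homogenization of $f$, one has $x_0\nmid F$ (otherwise $f$ would have degree $<d$), so $Y:=\overline X\cap V(x_0)$ is a hypersurface of degree $d$ in $V(x_0)\cong\P^{N-1}$, of pure dimension $N-2$. If $\ell\subset X$ is an affine line with direction $v$, then $\overline\ell\subset\overline X$ and $v=\overline\ell\cap V(x_0)\in Y$, so $\Delta\subseteq Y$. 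It therefore suffices to show that $\Delta$ is dense in some irreducible component of $Y$; equivalently, that the assignment sending an affine line $\ell\subset X$ to its direction dominates $Y$.

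\textbf{Producing a line through a point of $Y$.} The core construction, and the point where the hypothesis $d\le N-1$ enters, is as follows. Fix $p\in Y\subseteq\overline X$ and parametrize the lines of $\P^N$ through $p$ by a second point $w\in\P^{N-1}$. Expanding along such a line gives
\[F(p+sw)=\sum_{i=1}^{d}s^{i}Q^{p}_{i}(w),\]
where $Q^{p}_0=F(p)=0$ since $p\in\overline X$, and $Q^{p}_i$ is a form of degree $i$ in $w$. The line through $p$ and $w$ lies in $\overline X$ precisely when $Q^{p}_1(w)=\dots=Q^{p}_d(w)=0$. These are $d$ forms on $\P^{N-1}$, and since $d\le N-1$, the projective dimension theorem guarantees that their common zero locus $Z_p=V(Q^{p}_1,\dots,Q^{p}_d)$ is nonempty, of dimension $\ge N-1-d$. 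Thus through every $p\in Y$ there passes a line contained in $\overline X$. The payoff: if such a line $\ell$ is \emph{not} contained in $V(x_0)$, then it meets $V(x_0)$ only at $p$, so $\ell\cap\A^N$ is an affine line in $X$ with direction $p$, whence $p\in\Delta$.

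\textbf{The main obstacle.} What remains, and what I expect to be the crux, is to ensure that for a general $p\in Y$ at least one line of $\overline X$ through $p$ escapes to the finite part, i.e. that $Z_p\not\subseteq H$, where $H=\{w_0=0\}\subset\P^{N-1}$ is the hyperplane of directions $w$ for which the line through $p$ and $w$ lies inside $V(x_0)$. The difficulty is real: the projective dimension theorem only yields the lower bounds $\dim Z_p\ge N-1-d$ and $\dim(Z_p\cap H)\ge N-2-d$, which are consistent with the unwanted inclusion $Z_p\subseteq H$ (this would say that every line of $\overline X$ through $p$ already lies in $Y$). To rule this out I would argue on the incidence variety $\mathcal I=\{(p,\ell):p\in Y,\ \ell\subset\overline X\text{ a line through }p\}$. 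Projecting to $Y$ with fibers $Z_p$ forces $\dim\mathcal I\ge 2N-3-d$, whereas the sublocus of those $\ell$ lying in $V(x_0)$ maps to $F_1(Y)$ with one-dimensional fibers, so it has dimension $\dim F_1(Y)+1$; for $d\ge 2$ and $Y$ neither a cone nor a cylinder this is at most $2N-4-d$, strictly below $2N-3-d$, which forces lines off $H$ over a dense subset of $Y$. The degenerate possibilities that inflate $F_1(Y)$ I would then dispose of by hand: the case $d=1$, where $X$ is an affine hyperplane and the statement is immediate; and the cone and cylinder cases, where the lines are governed by passage through the vertex (respectively by the ruling direction) and $\Delta$ is computed explicitly.

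\textbf{Conclusion.} Granting the above, $\Delta$ contains a dense subset of $Y$, so $\dim\Delta=\dim Y=N-2=n-1$, and $X$ is a Kakeya subvariety of $\A^N$. The only genuinely delicate input is the transversality/dimension comparison of the third paragraph, isolating the special hypersurfaces for separate treatment; the existence step itself is a clean application of the projective dimension theorem, which is precisely where the bound $d\le N-1$ is used.
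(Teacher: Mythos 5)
Your reduction and the existence step are fine: $\Delta\subseteq Y=\overline X\cap V(x_0)=V(f_d)$, and for $d\le N-1$ the projective dimension theorem does produce, through every $p\in\overline X$, a line of $\overline X$ (this is where the degree hypothesis enters, as in the paper). But the step you yourself flag as the crux --- ruling out that all such lines through a general $p\in Y$ lie inside $V(x_0)$ --- is where the argument has a genuine gap, and the bound you invoke to close it is false. You claim $\dim\mathcal I_0=\dim F_1(Y)+1\le 2N-4-d$ for $Y$ ``neither a cone nor a cylinder.'' The quantity $2N-5-d$ is only the \emph{expected} dimension of $F_1(Y)$ for a degree-$d$ hypersurface in $\P^{N-1}$; in your situation $Y=V(f_d)$ is completely arbitrary, and the locus of $Y$ with excess lines is far larger than cones and cylinders. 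Already if the leading form $f_d$ has a linear factor, $Y$ contains a hyperplane $\Lambda\cong\P^{N-2}$ of $V(x_0)$ and $\dim F_1(Y)\ge\dim\G(1,N-2)=2N-6>2N-5-d$ for every $d\ge 2$, while $\overline X$ need be neither a cone nor a cylinder. Concretely, for $f=x_1x_2+x_3$ in $\A^3$ ($d=2$, $N=3$) one finds $\dim\mathcal I_0=\dim\mathcal I=1$, so your strict inequality $\dim\mathcal I_0<\dim\mathcal I$ fails even though the proposition holds; and since even in the generic case your two dimensions differ by exactly $1$, there is no slack to absorb any excess. Classifying and separately treating all hypersurfaces for which $F_1(Y)$ exceeds the expected dimension is essentially the whole difficulty, so deferring it ``by hand'' leaves the proof incomplete.

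It is worth noting that the paper avoids this pointwise analysis entirely. It works with the universal incidence correspondence $\mathcal A\subset\A(S_d)\times V(x_0)$ over the space of \emph{all} polynomials of degree at most $d$, computes $\dim\mathcal A$ exactly by fibering over the Grassmannian of lines through a fixed direction (each fiber being an affine space of codimension $d+1$ in $S_d$, whence irreducibility), pins down the one unknown parameter $t$ by exhibiting a single explicit hypersurface achieving $t=N-d-1$, and then uses properness of $\mathcal A\to\A(S_d)$ to force the fiber $\Delta(V(f))$ to have dimension exactly $N-2$ for \emph{every} nonzero $f$. That global argument is precisely what substitutes for the case analysis your approach would require.
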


\begin{proof}
Let $S_d$ be the $k$-vector space of all polynomials in $k[x_1,...,x_N]$ of degree at most $d$, and let $\A(S_d)$ be the affine space associated to $S_d$.  
Consider the incidence correspondence
\[\mathcal{A}=\{(X,v)\in\A(S_d)\times V(x_0)\ |\ \text{some line $l\subset X$ has direction $v$}\}\subset\A(S_d)\times V(x_0).\]
For $v\in V(x_0),$ let $\Sigma_v$ be the fiber over $v$ under the second projection.
Note that $\dim\Sigma_v$ does not depend on the specific $v\in V(x_0)$. 

Let $\G_v^\circ$ be the subset of the Grassmanian $\G(1,N)$ consisting of lines through $v$ and not contained in $V(x_0)$. Consider the incidence correspondence
\[\mathcal{B}=\{(X,l)\in\A(S_d)\times \G_v^\circ\ |\ l\subset \overline{X}\}\subset\A(S_d)\times \G_v^\circ.\]
Under the surjection $\mathcal{B}\to \G_v^\circ,$ each fiber is irreducible of dimension equal to $\dim S_d-d-1$ (see Lemma \ref{hypersurfcontainingaline} below), hence $\mathcal{B}$ is irreducible, of dimension equal to $\dim S_d-d+N-2$. 
On the other hand, the image of $\mathcal{B}$ under the first projection is precisely $\Sigma_v$ (in particular, $\Sigma_v$ is irreducible). 
Let $t:=\dim\mathcal{B}-\dim\Sigma_v$ 
(note that $t$ is independent of $v$), so
$\dim\Sigma_v=\dim S_d-d+N-2-t$. 
Therefore, $\dim\mathcal{A}=\dim S_d-d+2N-3-t$. 

Consider now the map $\phi:\mathcal{A}\to\A(S_d)$. The fiber of $\phi$ over $f\in\A(S_d)$ is the direction set $\Delta(V(f))$, so for each $f\neq 0$ in $\text{Image}(\phi)$ (such $f$'s certainly exist), we have the chain of inequalities
\[\dim\mathcal{A}-\dim S_d\leq \dim\mathcal{A}-\dim(\text{Image}(\phi))\leq\dim\phi^{-1}(f)=\dim(\Delta(V(f)))\leq N-2,\]
hence $t\geq N-d-1.$ On the other hand, we have $t\leq N-d-1$ by Lemma \ref{example_small_t} below, 
and therefore, all inequalities in the above chain must be equalities. 
In particular, for any $f\in\text{Image}(\phi) -\{0\},$ we have $\dim\Delta(V(f))=N-2$. Moreover, $\dim(\text{Image}(\phi))=\dim\A(S_d)$ and since the map $\phi$ is proper (note that $V(x_0)$ is proper over the point, hence so is the basechange map
$\A(S_d)\times V(x_0)\to\A(S_d)$), it is actually surjective. Therefore, in fact, any $f\in\A(S_d)-\{0\}$ belongs to $\text{Image}(\phi)$, and hence the above chain of inequalities holds for any $f\neq 0$. 
\end{proof}

\begin{Lem}
Let $v\in V(x_0)$ and let $l\in \G_v^\circ$. Then $\{X\in\A(S_d)\ |\ l\subset\overline{X}\}$ is an affine space of dimension
$\dim S_d-d-1$.
\label{hypersurfcontainingaline}
\end{Lem}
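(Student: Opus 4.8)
The plan is to linearize everything in sight. Homogenization to degree $d$, namely $f\mapsto F$, is a linear isomorphism from $S_d$ onto the space $W_d$ of homogeneous polynomials of degree $d$ in $x_0,\dots,x_N$; its inverse is $G\mapsto G(1,x_1,\dots,x_N)$. Under this isomorphism $\A(S_d)$ is identified with $\A(W_d)$, and the condition $l\subset\overline{X}=V(F)$ becomes a condition purely on $F\in W_d$. Since both the isomorphism and this condition are linear and homogeneous, the locus in question will be a linear subspace, so it suffices to compute its codimension inside $W_d$ and transport the answer back.

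Next I would write the condition $l\subset V(F)$ explicitly. Choose representatives $A,B\in k^{N+1}$ whose classes span $l$, and parametrize $l$ by $[s:t]\mapsto[sA+tB]$. Then the restriction of $F$ to $l$ is $F(sA+tB)$, a binary form of degree $d$ in $s,t$, and $l\subset V(F)$ holds exactly when all $d+1$ of its coefficients vanish. These are $d+1$ linear conditions on $F$; equivalently, the locus is the kernel of the linear evaluation map $\rho\colon W_d\to k[s,t]_d$, $F\mapsto F(sA+tB)$, where $k[s,t]_d$ denotes the $(d+1)$-dimensional space of binary forms of degree $d$.

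It then remains to show that $\rho$ is surjective, so that $\ker\rho$ has codimension exactly $d+1$. This is the one point that requires an argument, and it is where the fact that $A,B$ are linearly independent (i.e. that $l$ is genuinely a line) enters; I expect it to be the main, though mild, obstacle. Since $A$ and $B$ are independent in $k^{N+1}$, I can choose linear forms $\ell_0,\ell_1$ on $k^{N+1}$ with $\ell_0(A)=\ell_1(B)=1$ and $\ell_0(B)=\ell_1(A)=0$. For $0\le j\le d$ the form $F_j:=\ell_0^{\,d-j}\ell_1^{\,j}\in W_d$ then satisfies $F_j(sA+tB)=s^{d-j}t^j$, so $\rho(F_j)=s^{d-j}t^j$; as $j$ ranges over $0,\dots,d$ these exhaust a basis of $k[s,t]_d$, proving surjectivity.

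Combining these steps, $\ker\rho$ has dimension $\dim W_d-(d+1)=\dim S_d-d-1$. Transporting back through the homogenization isomorphism, $\{X\in\A(S_d)\ |\ l\subset\overline{X}\}$ is a linear subspace of $\A(S_d)$ of this dimension, hence in particular an affine space of dimension $\dim S_d-d-1$, as claimed. Note that the hypothesis $l\in\G_v^\circ$ plays no role in the dimension count beyond ensuring that $l$ is a line; the computation is valid for an arbitrary line $l\subset\P^N$.
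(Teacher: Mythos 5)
Your proof is correct and is essentially the paper's argument carried out in homogeneous coordinates: both realize the locus as the kernel of the linear ``restrict to $l$'' map onto a $(d+1)$-dimensional space of polynomials on $l$ (the paper normalizes $l=V(x_2,\dots,x_N)$ and reads off the codimension from $0\to (x_2,\dots,x_N)_{\leq d}\to k[x_1,\dots,x_N]_{\leq d}\to k[x_1]_{\leq d}\to 0$, while you prove surjectivity of $F\mapsto F(sA+tB)$ directly via a dual basis). The one imprecision is the identification $\overline{X}=V(F)$, which fails as stated when $\deg f<d$ (then $V(F)=V(x_0)\cup\overline{X}$); this is harmless here because $l\not\subset V(x_0)$ gives $l\subset V(F)\iff l\subset\overline{X}$ anyway, but it is precisely why your closing remark that the computation applies to arbitrary lines, including those inside $V(x_0)$, would need more care.
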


\begin{proof}
Change coordinates so that $v=[0:1:0:...:0]$ and $l=V(x_2,...,x_N).$ The set under investigation consists of all polynomials in the
ideal $(x_2,...,x_N)$ whose degree is at most $d$. The dimension count follows from inspecting the exact sequence
\[0\to (x_2,...,x_N)_{\leq d}\to k[x_1,...,x_N]_{\leq d}\to k[x_1]_{\leq d}\to 0\]
of $k$-vector spaces.
\end{proof}

\begin{Lem}
Let $v\in V(x_0)$ and let $1\leq d\leq N-1.$ There exists a hypersurface $V(f)\subset\A^N$ of degree $d$ in $\Sigma_v$ such that
\[\dim\{l\in \G_v^\circ\ |\ l-\{v\}\subset V(f)\}=N-d-1.\]
\label{example_small_t}
\end{Lem}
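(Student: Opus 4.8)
The plan is to normalize the direction and then exhibit an explicit $f$. First I would apply a linear change of projective coordinates fixing $\A^N$ and $V(x_0)$ so that $v=[0:1:0:\cdots:0]$; since the whole problem (and $\dim\Sigma_v$) is invariant under the subgroup of $\mathrm{PGL}_{N+1}$ fixing $V(x_0)$, which acts transitively on $V(x_0)\simeq\P^{N-1}$, there is no loss of generality. With this choice, an element of $\G_v^\circ$ is the same thing as an affine line parallel to the $x_1$-axis, namely the line $l_a=\{(t,a_2,\dots,a_N)\mid t\in k\}$ for a uniquely determined $a=(a_2,\dots,a_N)\in\A^{N-1}$. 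Writing any candidate as $f=\sum_j c_j(x_2,\dots,x_N)\,x_1^{\,j}$, the affine line $l_a$ lies in $V(f)$ precisely when $c_j(a)=0$ for all $j$. Hence the set in the statement is identified with $W:=\bigcap_j V(c_j)\subset\A^{N-1}$, and the task reduces to producing an $f$ of degree $d$ with $W\neq\emptyset$ and $\dim W=N-d-1$.

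The key step is the construction. Because $\deg(c_j x_1^{\,j})=\deg c_j+j$, the requirement $\deg f\le d$ forces $\deg c_j\le d-j$, so the higher coefficients are rigid while $c_0$ is free up to degree $d$. I would use the hypothesis $d\le N-1$, which guarantees that $x_2,\dots,x_{d+1}$ are genuine coordinates on $\A^{N-1}$, and simply take the $c_j$ to be distinct coordinate functions:
\[
f=x_2+x_1x_3+x_1^2x_4+\cdots+x_1^{d-1}x_{d+1}=\sum_{j=0}^{d-1}x_{j+2}\,x_1^{\,j}.
\]
The term $x_{d+1}x_1^{d-1}$ has degree $d$ and every other term has smaller degree, so $\deg f=d$ exactly. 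Here $c_j=x_{j+2}$ for $0\le j\le d-1$ and $c_j=0$ otherwise, so $W=V(x_2,\dots,x_{d+1})$ is the coordinate subspace $\{x_2=\cdots=x_{d+1}=0\}$ of $\A^{N-1}$, which is nonempty and of dimension $(N-1)-d=N-d-1$.

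Finally I would read off the conclusion: since $W\neq\emptyset$, the hypersurface $V(f)$ contains a line in direction $v$ (any $l_a$ with $a\in W$, whose projective closure passes through $v$), so $V(f)\in\Sigma_v$; and $\{l\in\G_v^\circ\mid l-\{v\}\subset V(f)\}$ is exactly $\{l_a\mid a\in W\}$, of dimension $N-d-1$. There is no deep obstacle here: the only point requiring care is the compatibility between the degree constraints $\deg c_j\le d-j$ and the need for the $c_j$ to cut out a locus of the exact codimension $d$. Taking the $c_j$ to be $d$ distinct coordinate functions settles both at once, and I would just verify the extreme cases $d=1$ (where $f=x_2$ and $W$ has dimension $N-2$) and $d=N-1$ (where $W$ shrinks to a single point, i.e. a single line).
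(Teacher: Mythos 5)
Your proposal is correct and takes essentially the same route as the paper: both normalize $v=[0:1:0:\cdots:0]$, identify lines of $\G_v^\circ$ with points of $\A^{N-1}_{x_2,\dots,x_N}$, and choose the coefficients of the powers of $x_1$ to be the distinct coordinate functions $x_2,\dots,x_{d+1}$, so that the locus of lines is $V(x_2,\dots,x_{d+1})$ of dimension $N-d-1$. The only difference is the (immaterial) ordering of which coordinate multiplies which power of $x_1$.
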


\begin{proof}
Without loss of generality, $v=[0:1:0:...:0]$. Identify $\G_v^\circ\simeq V(x_1)\cap D_+(x_0)\simeq V(x_1)\subset \A^{N}_{x_1,x_2,...,x_N}$ (slightly abusing notation), 
via $l\mapsto l\cap V(x_1).$ Consider hypersurfaces $V(f)$ which contain the line $V(x_2,...,x_N)$
joining $(0,...,0)$ and $v$. Any such $f$ can be written as 
\[f=g_1x_1^{d-1}+g_2x_1^{d-2}+\dots+g_{d-1}x_1+g_d,\]
where 
$g_i\in k[x_2,...,x_N]$,
$\deg(g_i)\leq i$ for each $i$ (with equality holding for some $i$), and each $g_i\in (x_2,...,x_N).$ Then the line joining $v$ and $(0,a_2,...,a_N)$ is contained in $V(f)$ if and only if
$(a_2,...,a_N)\in V(g_1,...,g_d)\subset\A^{N-1}_{x_2,...,x_N}.$ We can certainly pick $g_1,...,g_d$ such that $V(g_1,...,g_d)$ has dimension $N-1-d;$ for example, take $g_i=x_{i+1}$ for $i=1,...,d$. 
\end{proof}

\subsection{Linear projections yield covers}

Next, we investigate the image of a Kakeya subvariety under a liner projection and prove Proposition \ref{Kakeya_sub_gives_cover_Prop}. Note that if $X=\cup X_i$ is the decomposition of $X$ into irreducible components, then 
$\Delta(X)=\cup\Delta(X_i),$ so in the investigation of $\Delta(X),$ when convenient, we can assume that $X$ is irreducible.

Let $X\subset\A^N_{x_1,...,x_N}$ be an irreducible 
$n$-dimensional Kakeya subvariety of $\A^N$, and let $d$ be the degree of its closure $\overline{X}\subset\P^N_{[x_0:...:x_N]}$ in $\P^N$. Let $\Delta\subset V(x_0)$ be the set of directions of lines contained in $X$. Let $P\in V(x_0)-\overline{X}$ be arbitrary, and let
$H\subset\P^N$ be any hyperplane such that $P\notin H$. Let
$\pi:\P^N-\{P\}\to H$
be the linear projection from $P$ to $H$. 

Let $Y=\pi(X)\subset H\cap D_+(x_0)$ and note that $\overline{Y}=\pi(\overline{X}).$ 
By slight abuse of notation, the restriction 
$\pi:\overline{X}\onto \overline{Y}$ is also denoted by $\pi$. Note that $\pi$ is a finite map; in particular, $\dim Y=n$. When $n<N-1,$ we know that $\pi$ is birational and
$\overline{Y}$ has degree $d$ in $H$; when $n=N-1,$ we know that $\overline{Y}=H$ and $\pi$ has degree $d$. 

Since $\pi$ is a linear projection, it induces
$F_1(\overline{X})\to F_1(\overline{Y})$ and $F_1(X)\to F_1(Y).$ 
Let $\Delta'\subset H\cap V(x_0)$ be the set of directions of lines contained in $Y$. Thus, $\pi$ induces also
$\pi:\Delta\to\Delta',$
and all fibers of this map are finite. Therefore, $\dim\Delta'\geq \dim\pi(\Delta)=\dim\Delta=n-1$ and hence $Y=\pi(X)$ is a Kakeya subvariety of $\A^{N-1}=H\cap D_+(x_0).$

We can repeat the process described above and decrease the codimension of $X$ in $\A^N$. When we get to $N=n+1$,
linear projection $\pi$ as above will yield a finite map $\overline{X}\to\P^n=H$ with the property that 
$\pi(\Delta)\subset V(x_0)\cap H$ is $(n-1)$-dimensional. So, $\pi(\Delta)$ will contain an open subset $U\subset V(x_0)\cap H$ and hence for every $v\in U,$ there exists a line $l\subset X$ whose projection passes through $v$.

\subsection{A three--dimensional example coming from the Grassmanian $\G(1,4)$}
\label{G14_section}

Consider the $6$-dimensional Grassmanian $X=\G(1,4)\subset\P(\wedge^2 k^5)\simeq\P^9_{[p_{ij}]},$ embedded in the projective space $\P^9$ as a degree-$5$ subvariety via Plucker coordinates.
 
It is known (for example, see \cite{Rogora}) that $\dim F_1(X)=8$. So, if $W\subset\P^9$ is a $6$-dimensional linear subspace, the expected dimension of $F_1(X\cap W)$ is $2.$ By the main result in \cite{Rogora}, this is one of the examples of $3$-dimensional projective varieties whose Fano variety is $2$-dimensional. 

Consider $W=V(p_{12}-p_{15}, p_{23}-p_{25},p_{34}).$ Then $X'=X\cap W$ is irreducible and of dimension $3$. 
We perform $3$ appropriate linear projections,
whose composition, 
in affine coordinates, is given as follows:
 \[
X'=V(z-xy,ay-b+a,-bx+c)\into\A^6\to \A^3,\quad
(a,b,c,x,y,z)\mapsto (a-x+y,b-z,c).
\]
 
Consider now any direction $[\alpha:1:\gamma],$ with $\alpha\neq 0$. Then
$a=\alpha^2 t, b=\alpha t, c=\alpha\gamma t, x=\gamma, y=\frac{1}{\alpha}-1,z=\gamma(\frac{1}{\alpha}-1)$ defines a line in $\A^6$ which is contained in $X',$ and whose image under $X'\to\A^3$
is a line in direction $[\alpha^2:\alpha:\alpha\gamma]=[\alpha:1:\gamma].$ In other words, the map $X'\to\A^3$ is Kakeya cover in the
sense of Definition \ref{def_linear_Kakeya_cover}.

\section*{Acknowledgments}
This research was performed while the author was visiting the Institute for Pure and Applied Mathematics (IPAM), which is supported by the National Science Foundation.
I thank Terry Tao and Kiran Kedlaya for some inspiring and encouraging discussions.


\begin{thebibliography}{99}
\bibitem{DH} E. Dummit, M. Hablicsek, Kakeya sets over non-archimedean local rings, {\it Mathematika} {\bf 59} (2013), 257--266. 
\bibitem{Dvir} Z. Dvir,
On the size of Kakeya sets in finite fields, {\it J. Amer. Math. Soc.} {\bf 22} (2009),
no. 4, 1093--1097.
\bibitem{Rogora} E. Rogora, Varieties with many lines, {\it Manuscripta Math.} {\bf 82} (1994), no. 2, 207--226. 
\bibitem{SarafSudan}
S. Saraf, M. Sudan,
Improved lower bound on the size of Kakeya sets over finite fields, {\it Anal. PDE} {\bf 1}
(2008), no. 3, 375-379.
\bibitem{Slavov_alg_geom_version} K. Slavov,
An algebraic geometry version of the Kakeya problem,
arXiv:1410.3701. 
\bibitem{W} T. Wolff, An improved bound for Kakeya type maximal functions, {\it Rev. Mat. Iberoamer-
icana Volume} {\bf 11} (1999), 651--674.
\end{thebibliography}
\end{document}